\newtheorem{theorem}{Theorem}[section]
\newtheorem{lemma}[theorem]{Lemma}
\newtheorem{proposition}[theorem]{Proposition}
\newtheorem{corollary}[theorem]{Corollary}
\theoremstyle{definition}
\theoremstyle{remark}
\newtheorem{remark}[theorem]{Remark}
\numberwithin{equation}{section}
\begin{document}
\title{An extension of the L\"owner--Heinz inequality}

\author[M.S. Moslehian, H. Najafi]{Mohammad Sal Moslehian and Hamed Najafi}

\address{Department of Pure Mathematics, Center of Excellence in Analysis on Algebraic Structures (CEAAS), Ferdowsi University of
Mashhad, P.O. Box 1159, Mashhad 91775, Iran}
 \email{moslehian@ferdowsi.um.ac.ir
and moslehian@member.ams.org}
\email{hamednajafi20@gmail.com}

\subjclass[2010]{Primary 47A63; Secondary 47B10, 47A30.}

\keywords{L\"owner--Heinz inequality; positive operator; Operator
monotone function.}

\begin{abstract}
We extend the celebrated L\"owner--Heinz inequality by showing that
if $A, B$ are Hilbert space operators such
that $A > B \geq 0$, then
\begin{eqnarray*}
A^r - B^r \geq ||A||^r-\left(||A||-
\frac{1}{||(A-B)^{-1}||}\right)^r > 0
\end{eqnarray*}
for each $0 < r \leq 1$. As an application we prove that
\begin{eqnarray*}
\log A - \log B \geq \log||A||-
\log\left(||A||-\frac{1}{||(A-B)^{-1}||}\right)>0.
\end{eqnarray*}
\end{abstract}

\maketitle
%-------------------------------------------------------------------------------

\section{Introduction}
Let $(\mathscr{H}, \langle \cdot,\cdot\rangle)$ be a complex Hilbert
space and $\mathbb{B}(\mathscr{H})$ denote the algebra of all
bounded linear operators on $\mathscr{H}$ equipped with the operator
norm $\|\cdot\|$. There are three types of ordering on the real
space of all self-adjoint operators as follows. Let $A,B \in
\mathbb{B}(\mathscr{H})$ be self-adjoint. Then\\
(1) $A \geq B$ if $\langle Ax, x\rangle \geq \langle Bx,
x\rangle $.\\
(2) $A \succ B$ if $\langle Ax, x\rangle > \langle Bx, x\rangle $
holds for all non-zero elements $x\in \mathscr{H}$. \\
(3) $A > B$ if $A\geq B$ and $A-B$ is invertible.\\
Clearly (3)$\Rightarrow$ (2)$\Rightarrow$ (1) but the reverse
implications are not valid in general. For instance, if $A$ is the
diagonal operator $(1, 1/2, 1/3, \cdots)$ on $\ell^2$, then $A\succ
0$ but $A\ngtr 0$. Of course, in the case where $H$ is of finite
dimension, (2) and (3) are equivalent. A continuous real valued
function $f$ defined on an interval $J$ is called operator monotone
if $A\geq B$ implies that $f(A)\geq f(B)$ for all self-adjoint
operators $A, B $ with spectra in $J$. The L\"owner--Heinz
inequality says that, $f(x)= x^r\,\,(0 < r \leq 1)$ is operator
monotone on $[0, \infty)$. L\"owner \cite{LOW} proved the inequality
for matrices. Heinz \cite{HEI} proved it for positive operators
acting on a Hilbert space of arbitrary dimension. Based on the
$C^*$-algebra theory, Pedersen \cite{PED} gave a shorter proof of
the inequality.

There exist several operator norm inequalities each of which is
equivalent to the L\"owner-Heinz inequality, see \cite{FUR}. One of
them is $\|A^rB^r\|\le\|AB\|^r$, called the C\"ordes inequality in
the literature, in which $A$ and $B$ are positive operators and $0 <
r\le 1$. A generalization of the C\"ordes inequality for operator
monotone functions is given in \cite{FF}. It is shown in \cite{ACS}
that this norm inequality is related to the Finsler structure of the
space of positive invertible elements.

Kwong \cite{KWO} showed that if $A > B$ ($A \succ B$, resp.), then
$A^r> B^r$ ($A^r \succ B^r$, ressp.) for $0<r\leq 1$. Uchiyama
\cite{Uc} showed that for every non-constant operator monotone
function $f$ on an interval $J$, $A \succ B$ implies $f(A) \succ
f(B)$ for all self-adjoint operators $A, B$ with spectra in $J$.

There are several extensions of the L\"owner--Heinz inequality. The
Furuta inequality \cite{FPAMS}, which states that if $A \ge B \ge
0$, then for $r\ge 0$, $(A^{r/2}A^p A^{r/2}) ^{1/q}\ge(A^{r/2} B^p
A^{r/2})^{1/q}$ holds for $p \ge 0$ and $q \ge 1$ with $(1+r)q \ge
p+r$, is known as an exquisite extension of the L\"owner--Heinz
inequality; see the survey article \cite{FUJ} and references
therein.

If $f$ is an operator monotone function on $(-1,1)$, then $f$ can be
represented as
\begin{eqnarray}\label{7-3}
f(t)=f(0)+f^{'}(0)\int_{-1}^{1}\frac{t}{1-\lambda t} d\mu(\lambda)
\end{eqnarray}
where $\mu$ is a positive measure on $(-1,1)$. It is known that
\begin{eqnarray}\label{0-1}
t^r = \frac{\sin(r\pi)}{\pi} \int_0^{\infty} \frac{t }{\lambda+t}
\lambda^{r-1} d\lambda,
\end{eqnarray}
in which $0<r<1$, and
\begin{eqnarray}\label{0-2}
A^r = \frac{\sin(r\pi)}{\pi} \int_0^{\infty} \frac{A }{\lambda+A}
\lambda^{r-1} d\lambda,
\end{eqnarray}
where $A$ is positive and $0<r<1$; see e.g. \cite[Chapter V]{BH}.

In this paper we extend the L\"owner--Heinz inequality by showing
that if $A, B \in \mathbb{B}(\mathscr{H})$ such that $A > B \geq 0$,
then
\begin{eqnarray*}
A^r - B^r \geq ||A||^r-\left(||A||-
\frac{1}{||(A-B)^{-1}||}\right)^r > 0
\end{eqnarray*}
for each $0 < r \leq 1$. As an application we prove that
\begin{eqnarray*}
\log A - \log B \geq \log||A||-
\log\left(||A||-\frac{1}{||(A-B)^{-1}||}\right)>0.
\end{eqnarray*}

\section{The results}

We start our work with the following useful lemma.

\begin{lemma}\label{11}
Let $A,B \in \mathbb{B}(\mathscr{H})$ be invertible positive
operators such that $A - B \geq m >0$. Then
\begin{eqnarray}\label{2-1}
 B^{-1} - A^{-1} \geq \frac{m}{(||A||-m) \ ||A||}\,.
\end{eqnarray}
\end{lemma}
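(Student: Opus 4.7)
The plan is to reduce the inequality to the commutative situation where $A$ and $A-mI$ are simultaneously functional-calculus operators, so that ordinary arithmetic can be applied spectrally.

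First, from the hypothesis $A-B \geq m > 0$ (interpreted as $A-B \geq mI$), I rearrange to obtain $B \leq A - mI$. Since $B$ is positive and invertible, this shows in particular that $A - mI \geq B > 0$, so $A - mI$ is itself a positive invertible operator. Next, I invoke the operator monotonicity of $t \mapsto -t^{-1}$ on $(0,\infty)$ — a basic, classical fact independent of the full L\"owner--Heinz theorem and therefore safe to use — to conclude
\begin{equation*}
B^{-1} \;\geq\; (A - mI)^{-1},
\end{equation*}
and hence
\begin{equation*}
B^{-1} - A^{-1} \;\geq\; (A - mI)^{-1} - A^{-1}.
\end{equation*}

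The second step is the commutative computation. Because $A - mI$ and $A$ commute, I can write
\begin{equation*}
(A - mI)^{-1} - A^{-1} \;=\; (A - mI)^{-1}\bigl(A - (A - mI)\bigr)A^{-1} \;=\; m\,(A - mI)^{-1} A^{-1}.
\end{equation*}
This is a positive operator that is a function of $A$ alone, so its spectrum can be read off directly from the spectrum of $A$.

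Finally, I use the norm bound $A \leq \|A\|\,I$, which also gives $A - mI \leq (\|A\|-m)I$, together with $A - mI > 0$, to deduce on the spectral side that
\begin{equation*}
(A - mI)^{-1}A^{-1} \;\geq\; \frac{1}{(\|A\|-m)\,\|A\|}\,I.
\end{equation*}
Multiplying by $m$ and combining with the previous inequality yields the desired bound. The only delicate point worth checking is the positivity and invertibility of $A - mI$, which is handled at the outset; once that is in place the rest is routine spectral calculus, and no deeper fact (such as L\"owner--Heinz itself) is invoked, so the lemma is genuinely available as a building block for the main theorem.
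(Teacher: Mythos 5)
Your proof is correct and follows essentially the same route as the paper: both reduce to $B^{-1}\geq (A-mI)^{-1}$ via operator monotonicity of the inverse and then handle $(A-mI)^{-1}-A^{-1}$ by a purely commutative spectral computation with functions of $A$ alone. The only cosmetic difference is that you use the resolvent identity $(A-mI)^{-1}-A^{-1}=m(A-mI)^{-1}A^{-1}$ together with $A\leq\|A\|I$, whereas the paper multiplies through and evaluates $\|A^{2}-mA\|$ exactly; your version is, if anything, slightly cleaner.
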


\begin{proof}
Since $f(t)=\frac{1}{t}$ is a decreasing operator monotone function
on $[0,\infty)$ we have $B^{-1} \geq (A - m)^{-1}$. On the other
hand
\begin{eqnarray*}
&&(A -m)^{-1} \geq A^{-1} + \frac{m}{(||A||-m)  ||A||} \\
&\Longleftrightarrow& (A^{-1} + \frac{m}{(||A||-m)  ||A||})(A-m) \leq 1 \\
&\Longleftrightarrow&   \frac{A^2}{(||A||-m)||A||} - \frac{mA}{(||A||-m)||A||} \leq 1\\
&\Longleftrightarrow&  A^2 - m A \leq (||A||-m)||A||\\
&\Longleftrightarrow&  ||A^2 - m A|| \leq (||A||-m)||A||.
\end{eqnarray*}
There exists $\lambda_0\in {\rm sp}(A)$ such that $||A||=\lambda_0$.
Since $A \geq m >0$, we have
\begin{eqnarray*}
||A^2 - m A||&=& \max \{\lambda \ : \ \lambda\in {\rm sp}(A^2-mA)\} \\
&=& \max \{\lambda^2-m \lambda \ : \ \lambda\in {\rm sp}(A)\}\\
&=& \lambda_0^2-m \lambda_0 \\
&=& (||A||-m)||A||.
\end{eqnarray*}
So $B^{-1} \geq (A - m)^{-1}  \geq A^{-1} + \frac{m}{(||A||-m)
||A||}$.
\end{proof}
%====================================================================================================

Now we use Lemma \ref{11} to prove an analogous but different result
to the main theorem of Uchiyama \cite{Uc} in an easy fashion as an
offshoot of our work.

\begin{proposition}
Let $f$ be a non-constant operator monotone function on an interval
$J$ and $A, B $ be self-adjoint operators with spectra in $J$ such
that $A >B$. Then $f(A)>f(B)$.
\end{proposition}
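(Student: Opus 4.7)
The plan is to reduce the assertion to Lemma~\ref{11} via the integral representation of operator monotone functions. Since $A > B$, there is $m > 0$ with $A - B \geq mI$. After an affine change of variable (which preserves operator monotonicity and the ordering $>$), I may assume $J$ is an open subinterval of $(-1,1)$ containing $0$, so that the representation \eqref{7-3} applies:
\begin{equation*}
f(t) = f(0) + f'(0)\int_{-1}^{1}\frac{t}{1-\lambda t}\,d\mu(\lambda).
\end{equation*}
Non-constancy of $f$, combined with its operator monotonicity, forces $f'(0) > 0$ in this representation.

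Rewriting $\frac{t}{1-\lambda t} = \frac{1}{\lambda}\bigl(\frac{1}{1-\lambda t} - 1\bigr)$, evaluation at $A$ and $B$ gives
\begin{equation*}
f(A) - f(B) = f'(0)\int_{-1}^{1}\frac{1}{\lambda}\bigl[(I-\lambda A)^{-1} - (I-\lambda B)^{-1}\bigr]\,d\mu(\lambda).
\end{equation*}
For each $\lambda \in (-1,1)\setminus\{0\}$, the operators $I - \lambda A$ and $I - \lambda B$ are positive invertible (since $\|\lambda A\|, \|\lambda B\| < 1$), and the larger minus the smaller equals $|\lambda|(A-B) \geq |\lambda| m I$. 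Lemma~\ref{11}, applied to this ordered pair, then produces a continuous strictly positive function $\gamma$ on $(-1,1)\setminus\{0\}$, extended continuously to $\lambda = 0$ with $\gamma(0) = m$ via the Taylor expansion $(I-\lambda A)^{-1} - (I-\lambda B)^{-1} = \lambda(A-B) + O(\lambda^2)$, such that the integrand is bounded below by $\gamma(\lambda)\,I$ pointwise.

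Integrating against the probability measure $\mu$ yields $f(A) - f(B) \geq f'(0)\,c\,I$ with $c = \int_{-1}^{1}\gamma(\lambda)\,d\mu(\lambda) > 0$, hence $f(A) > f(B)$. The main technical point is that Lemma~\ref{11} genuinely applies across the whole integration range: this requires the norm of the larger of $I-\lambda A$ and $I-\lambda B$ to strictly exceed $|\lambda|m$, which holds automatically because the larger operator dominates the smaller (positive invertible) plus $|\lambda|mI$, so its spectrum, and a fortiori its norm, strictly exceeds $|\lambda|m$.
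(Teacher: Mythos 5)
Your proposal is correct and follows essentially the same route as the paper: reduce to $(-1,1)$, write $\frac{t}{1-\lambda t}$ as an affine function of $(1-\lambda t)^{-1}$, apply Lemma \ref{11} to the ordered pair $I-\lambda A$, $I-\lambda B$ to obtain a strictly positive scalar lower bound $\gamma(\lambda)$ on $f_{\lambda}(A)-f_{\lambda}(B)$, and integrate against the nonzero measure $\mu$ using $f'(0)>0$. The only cosmetic differences are that you handle both signs of $\lambda$ uniformly via $|\lambda|$ and make explicit the check that the denominators arising from Lemma \ref{11} are positive, which the paper leaves implicit.
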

\begin{proof}
 Without loss of generality we assume that $J=(-1,1)$. Let $A,B
\in \mathbb{B}(\mathscr{H})$ be self-adjoint operators with spectra
in $(-1,1)$ and $A-B$ is positive and invertible. So there exists $m>0$ such that $A-B \geq m
>0$. Put $f_{\lambda}(t)=\frac{t}{1-\lambda t}$ for each $\lambda$ with $|\lambda|< 1$.
We shall show that $f_{\lambda}(A)-f_{\lambda}(B)$ is bounded blow and so invertible.
 It is clear that the claim is true for $\lambda = 0$. If $0 <\lambda<
1$, then $(1-\lambda B)- (1-\lambda A) =\lambda (A-B)>  \lambda m
>0$ as well as $1-\lambda B$ and $1-\lambda A$ are positive invertible operators. Since
\begin{eqnarray*}\label{8-2}
\frac{t}{1-\lambda
t}=\frac{-1}{\lambda}+\frac{1}{\lambda}\left(\frac{1}{1-\lambda
t}\right),
\end{eqnarray*}
by Lemma \ref{11}, we have
\begin{eqnarray*}
f_{\lambda}(A)-f_{\lambda}(B)&=&\frac{1}{\lambda}\left(\frac{1}{1-\lambda A}-\frac{1}{1-\lambda B}\right)\\
&\geq& \frac{1}{\lambda}\left( \frac{\lambda m}{(||1-\lambda
B||-\lambda m) \ ||1-\lambda B||} \right)
 \ \ \ \ \ \ \
(\rm{by~} \eqref{11}) \\
&=&\frac{m}{(||1-\lambda B||-\lambda m) \ ||1-\lambda B||} > 0
\end{eqnarray*}
A similar argument shows that
 \begin{eqnarray*}
 f_{\lambda}(A)-f_{\lambda}(B) \geq
 \frac{m}{(||1-\lambda A||+\lambda m) \ ||1-\lambda A||} > 0
 \end{eqnarray*}
 for each $-1 <\lambda< 0$. Since $f$
 is operator monotone on $(-1,1)$, it can be represented as
\begin{eqnarray*}
f(t)=f(0)+f^{'}(0)\int_{-1}^{1}f_{\lambda}(t) d\mu(\lambda)\,,
\end{eqnarray*}
where $\mu$ is a nonzero positive measure on $(-1,1)$. Since $f$ is
nonconstant, $f^{'}(0)>0$, \cite[Lemma  2.3]{B-S}. Hence
\begin{eqnarray*}
f(A)&-&f(B)\\
&=&f^{'}(0)\int_{-1}^{1}\left(\frac{A}{1-\lambda A} -  \frac{B}{1-\lambda B}\right) d\mu(\lambda)\\
&=&f^{'}(0)\int_{-1}^{1}\left(f_{\lambda}(A) -  f_{\lambda}(B)\right) d\mu(\lambda)\\
&\geq& f^{'}(0)\int_{-1}^{1} m_{\lambda} d\mu(\lambda),
\end{eqnarray*}
where
$$m_{\lambda}= \frac{m}{(||1-\lambda B||-\lambda m) \ ||1-\lambda B||}$$ if $0\leq \lambda
<1$, and $$m_{\lambda}= \frac{m}{(||1-\lambda A||+\lambda m) \
||1-\lambda A||}$$ if $-1<\lambda<0$. Since $\mu$ is a nonzero
positive measure and $m_{\lambda}>0$, we have
 \begin{eqnarray*}
f(A)-f(B)\geq f^{'}(0)\int_{-1}^{1} m_{\lambda} d\mu(\lambda) > 0.
\end{eqnarray*}
Therefore $f(A)>f(B)$.
\end{proof}

%===================================================================================================

Our main result reads as follows.

\begin{theorem}\label{1-1}
Let $A,B \in \mathbb{B}(\mathscr{H})$ be positive operators such
that $A - B \geq m > 0 $ and $0<r \leq 1$. Then
$$ A^{r} - B^{r} \geq ||A||^r-(||A||-m)^r\,.$$
\end{theorem}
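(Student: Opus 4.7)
The plan is to use the integral representation \eqref{0-2} to reduce the operator inequality to a scalar identity, with Lemma \ref{11} providing the key pointwise estimate inside the integral.

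First, I would dispose of the boundary case $r=1$ at once: the hypothesis $A-B\ge m$ gives $A-B\ge m\cdot I = \|A\|-(\|A\|-m)$ directly. So I may assume $0<r<1$ and use the representation
$$A^r = \frac{\sin(r\pi)}{\pi}\int_0^\infty \frac{A}{\lambda+A}\,\lambda^{r-1}\,d\lambda,$$
and similarly for $B^r$. Writing $\frac{A}{\lambda+A}=I-\frac{\lambda}{\lambda+A}$ (and likewise for $B$), subtraction yields
$$A^r-B^r = \frac{\sin(r\pi)}{\pi}\int_0^\infty \lambda^r\left(\frac{1}{\lambda+B}-\frac{1}{\lambda+A}\right)d\lambda.$$

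Next, for each $\lambda>0$, I would apply Lemma \ref{11} to the positive invertible operators $\lambda+A$ and $\lambda+B$: their difference equals $A-B\ge m>0$, and $\|\lambda+A\|=\lambda+\|A\|$ because $A\ge 0$. Note also that $A\ge A-B\ge m$ forces $\|A\|\ge m$, so $\lambda+\|A\|-m>0$ for every $\lambda>0$, making the lemma applicable. It gives
$$\frac{1}{\lambda+B}-\frac{1}{\lambda+A}\;\ge\;\frac{m}{(\lambda+\|A\|-m)(\lambda+\|A\|)}\cdot I.$$
Plugging this into the integral produces the scalar lower bound
$$A^r-B^r \;\ge\; \frac{\sin(r\pi)}{\pi}\int_0^\infty \frac{m\,\lambda^r}{(\lambda+\|A\|-m)(\lambda+\|A\|)}\,d\lambda.$$

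Finally, I would recognize the right-hand side as exactly $\|A\|^r-(\|A\|-m)^r$. Indeed, applying the scalar identity \eqref{0-1} to $t=\|A\|$ and to $t=\|A\|-m$ and subtracting gives, after the same algebraic manipulation $\tfrac{t}{\lambda+t}=1-\tfrac{\lambda}{\lambda+t}$ and a partial-fraction combination,
$$\|A\|^r-(\|A\|-m)^r = \frac{\sin(r\pi)}{\pi}\int_0^\infty \frac{m\,\lambda^r}{(\lambda+\|A\|-m)(\lambda+\|A\|)}\,d\lambda,$$
which matches the bound above term for term and completes the proof.

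The routine but main technical step to watch is justifying the integral manipulations (convergence of each individual piece, and the application of Lemma \ref{11} at every $\lambda$); the edge case $\|A\|=m$ is harmless since it forces $B=0$ and $A=mI$, in which both sides reduce to $m^r$. No further obstacles arise, as the Lemma supplies exactly the pointwise inequality needed for the kernel, and the representation \eqref{0-1} lets the integrated bound be identified in closed form.
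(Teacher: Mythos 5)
Your proof is correct, and it follows the paper's overall strategy — the integral representation \eqref{0-2}, the algebraic reduction of $\frac{A}{\lambda+A}-\frac{B}{\lambda+B}$ to $\lambda\bigl(\frac{1}{\lambda+B}-\frac{1}{\lambda+A}\bigr)$, and Lemma \ref{11} applied to $\lambda+A$, $\lambda+B$ for each $\lambda>0$ — but it diverges in the final step, and in a way worth noting. The paper evaluates
$$\int_0^{\infty}\frac{m\,\lambda^{r}}{(\lambda+\|A\|)(\lambda+\|A\|-m)}\,d\lambda$$
by contour integration: a keyhole contour around the branch cut of $z^r$ and the Cauchy residue theorem. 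You instead observe that this integrand is exactly what you get by applying the scalar representation \eqref{0-1} to $t=\|A\|$ and $t=\|A\|-m$ and subtracting, since $\frac{a}{\lambda+a}-\frac{b}{\lambda+b}=\frac{\lambda(a-b)}{(\lambda+a)(\lambda+b)}$. This is a genuine simplification: it eliminates the complex analysis entirely, makes the appearance of $\|A\|^r-(\|A\|-m)^r$ on the right-hand side transparent rather than a computation, and exhibits the theorem as the statement that the operator kernel dominates, pointwise in $\lambda$, the scalar kernel for the pair $(\|A\|,\|A\|-m)$. Your side remarks are also sound: the case $r=1$ is immediate from $A-B\ge m$; $\|\lambda+A\|=\lambda+\|A\|$ holds since $A\ge0$; and $\|A\|\ge m$ guarantees the denominators in the lemma's bound stay positive for every $\lambda>0$ (with the degenerate case $\|A\|=m$ forcing $A=mI$, $B=0$, where the inequality is an identity). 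The paper's residue computation buys nothing over your argument here; if anything, your route is the one that generalizes more readily to other operator monotone functions with a representing measure.
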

\begin{proof}
Let $0 < r < 1$. First note that,
\begin{eqnarray*}
\frac{A}{\lambda+A}- \frac{B}{\lambda+B}
&=& \lambda\left(\frac{1}{\lambda + B}-\frac{1}{\lambda+ A}\right)\\
&\geq& \frac{\lambda m}{(||A+\lambda||-m)||A+\lambda||}  \ \ \rm{by} \ (\ref{2-1}) \\
&=& \frac{\lambda m}{(||A||+\lambda -m)(||A||+\lambda)}
\end{eqnarray*}
for each $\lambda > 0$. By using (\ref{0-2})  we have
\begin{align*}
A^{r}&-B^{r}\\
&= \frac{\sin(r\pi)}{\pi} \int_{0}^{\infty} \lambda^{r-1}\left( \frac{A}{\lambda+A}- \frac{B}{\lambda+B}\right) d\lambda\\
& \geq  \frac{\sin(r\pi)}{\pi} \int_{0}^{\infty} ( \frac{m
\lambda^{r} }{(||A||+\lambda -m)(||A||+\lambda)})d\lambda,
\end{align*}
We need to compute
\begin{eqnarray*}
I=\int_0^{\infty}
\frac{\lambda^{r}}{(\lambda+||A||)(\lambda+(||A||-m))} d\lambda
\end{eqnarray*}
where $0<r<1$. We will need the branch cut for $z^r=\rho^r
e^{ir\theta}$, in which $z=\rho e^{i\theta}$ and $0\leq \theta \leq
2\pi$. Consider
\begin{eqnarray*}
\int_{C} \frac{z^{r}}{(z+||A||)(z+(||A||-m))} dz\,,
\end{eqnarray*}
where the keyhole contour $C$ consists of a large circle $C_R$ of
radius $R$, a small circle $C_{\epsilon}$ of radius $\epsilon$  and
two lines just above and below the branch cuts $\theta =0$; see
Figure 1. The contribution from $C_{R}$ is $O(R^{r-2} ) 2\pi R =
O(R^{r-1})=0$ as $R\rightarrow \infty$. Similarly the contribution
from $C_{\epsilon}$ is zero as $\epsilon\rightarrow 0$. The
contribution from just above the branch cut and from just below the
branch cut is $I$ and $-e^{2r\pi i}I$, respectively, as
$\epsilon\rightarrow 0$ and $R\rightarrow \infty$. Hence, taking the
limits as $\epsilon\rightarrow 0$ and $R\rightarrow \infty$,
\begin{figure}[her!]
\centering
\includegraphics[width=0.4\textwidth]{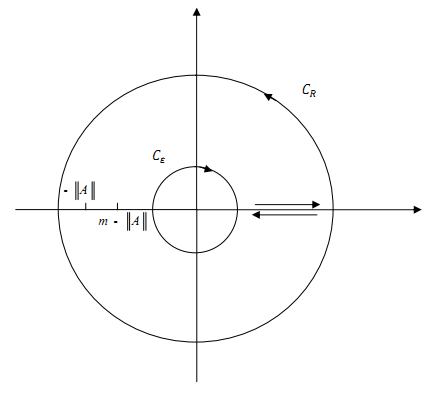}\\
\caption{Keyhole contour} \label{5-5}
\end{figure}
 \begin{eqnarray*}
 (1-e^{2r\pi i})I&=&\int_{C} \frac{z^{r}}{(z+||A||)(z+(||A||-m))} dz\\
 &=& -2 \pi i e^{r\pi i}\left(\frac{||A||^r-(||A||-m)^r}{||A||-(||A||-m)}\right)
\end{eqnarray*}
by the Cauchy residue theorem. So
\begin{eqnarray*}
 I=\frac{\pi}{m\sin(r\pi)}\left(||A||^r-(||A||-m)^r\right)\,.
\end{eqnarray*}
Therefore
\begin{align*}
A^{r}&-B^{r}\\
& \geq \frac{\sin(r\pi)}{\pi} \int_{0}^{\infty} ( \frac{m \lambda^{r}}{(||A||+\lambda -m)(||A||+\lambda)})d\lambda \\
&= ||A||^r-(||A||-m)^r.
\end{align*}
\end{proof}

%======================================================================================%
\begin{corollary}\label{ccc}
Let $A,B \in \mathbb{B}(\mathscr{H})$ be positive operators such
that $A - B \geq m >0 $. Then
$$ \log A - \log B \geq \log||A||- \log(||A||-m)\,.$$
\end{corollary}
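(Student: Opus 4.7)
The plan is to deduce Corollary~\ref{ccc} from Theorem~\ref{1-1} by a limiting argument as $r \to 0^{+}$. First I would observe that the hypotheses $A - B \geq m > 0$ and $B \geq 0$ force $A \geq mI$, so $A$ is invertible; the inequality $\log A - \log B \geq \cdots$ of course presupposes that $B$ is invertible as well, and I would proceed under that standing assumption. Applying Theorem~\ref{1-1} and dividing by $r > 0$ gives, for every $r \in (0,1]$,
\begin{eqnarray*}
\frac{A^{r} - B^{r}}{r} \;\geq\; \frac{\|A\|^{r} - (\|A\|-m)^{r}}{r}.
\end{eqnarray*}

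Next I would take $r \to 0^{+}$ on both sides. The scalar identity $\frac{t^{r}-1}{r} = \frac{e^{r \log t} - 1}{r} \to \log t$ holds uniformly in $t$ on any compact subset of $(0,\infty)$. Since the spectra of $A$ and $B$ lie in such a compact subset of $(0,\infty)$, the continuous functional calculus yields
\begin{eqnarray*}
\frac{A^{r} - I}{r} \longrightarrow \log A, \qquad \frac{B^{r} - I}{r} \longrightarrow \log B
\end{eqnarray*}
in operator norm, so the left-hand side converges to $\log A - \log B$. The right-hand side is a scalar limit and equals $\log\|A\| - \log(\|A\|-m)$. Because the cone of positive operators is norm-closed, the inequality is preserved in the limit, giving the claim.

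The only real obstacle is the uniform-convergence step justifying the passage of the functional calculus through the limit, and this is routine. An alternative, more hands-on approach (closer in spirit to the proof of Theorem~\ref{1-1}) would be to exploit the integral representation
\begin{eqnarray*}
\log A - \log B \;=\; \int_{0}^{\infty}\!\left(\frac{1}{s+B} - \frac{1}{s+A}\right) ds,
\end{eqnarray*}
apply Lemma~\ref{11} pointwise in $s$ (with $A \mapsto A+s$, $B \mapsto B+s$, and the same $m$) to bound the integrand below by $\frac{m}{(\|A\|+s-m)(\|A\|+s)}$, and then evaluate the resulting scalar integral by partial fractions to recover $\log\|A\| - \log(\|A\|-m)$. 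Either route produces the corollary in a few lines.
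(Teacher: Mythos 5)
Your primary argument is essentially the paper's own proof: the authors apply Theorem~\ref{1-1} with $r=1/n$ and use the uniform convergence of $f_n(t)=n(t^{1/n}-1)$ to $\log t$ on compact subsets of $(0,\infty)$, which is exactly your $\frac{t^r-1}{r}\to\log t$ limit reparametrized. Your remark that invertibility of $B$ must be assumed for $\log B$ to make sense is a fair point the paper leaves implicit, and your alternative route via the integral representation of the logarithm is a valid variant, but the main line of reasoning coincides with the published proof.
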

\begin{proof}
Put $f_n(t)=n(t^{1 \over n}-1)$ on $[0,\infty)$. Then the sequence
$\{f_n\}$ uniformly converges to $\log t$ on any compact subset of
$(0,\infty)$. Hence
\begin{eqnarray*}
 \log A - \log B &=& \lim_{n\rightarrow \infty} f_n(A) - f_n(B)\\
 &\geq& \lim_{n\rightarrow \infty} n(||A||^{1\over n}-(||A||-m)^{1 \over n})  \\
 &=& \log||A||- \log(||A||-m).
 \end{eqnarray*}
\end{proof}
%==========================================================================================%

\begin{corollary}\label{12}
 Let $A,B \in \mathbb{B}(\mathscr{H})$ such that $A > B\geq 0$. Then
\begin{eqnarray*}
{\rm (i)} A^r &-& B^r\\
&\geq& ||A||^r-\left(||A||- \frac{1}{||(A-B)^{-1}||}\right)^r
\end{eqnarray*}
for all $0 < r \leq 1$
\begin{eqnarray*}
{\rm (ii)} \log A &-& \log B \\
&\geq& \log||A||- \log\left(||A||-\frac{1}{||(A-B)^{-1}||}\right)\,.
\end{eqnarray*}
\end{corollary}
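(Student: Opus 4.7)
The plan is to reduce Corollary \ref{12} directly to Theorem \ref{1-1} and Corollary \ref{ccc} by identifying the correct value of $m$. Observe that the hypothesis $A > B \geq 0$ tells us that $C := A - B$ is a positive invertible operator, and $B$ is positive. What we need is an explicit positive lower bound for $C$ in the operator sense, so that Theorem \ref{1-1} and Corollary \ref{ccc} become applicable.

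The key elementary fact is that for any positive invertible operator $C \in \mathbb{B}(\mathscr{H})$, the continuous functional calculus gives
\begin{eqnarray*}
\inf \mathrm{sp}(C) = \frac{1}{\|C^{-1}\|},
\end{eqnarray*}
and hence $C \geq \frac{1}{\|C^{-1}\|}\,I > 0$. I would apply this to $C = A - B$, obtaining
\begin{eqnarray*}
A - B \,\geq\, \frac{1}{\|(A-B)^{-1}\|} \,=:\, m \,>\, 0.
\end{eqnarray*}
Since $B \geq 0$, we also have $A = (A-B) + B \geq m > 0$, so both $A$ and $B$ are positive operators in the sense required by the hypotheses of Theorem \ref{1-1}.

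For part (i), I would invoke Theorem \ref{1-1} with this choice of $m$ to conclude
\begin{eqnarray*}
A^r - B^r \,\geq\, \|A\|^r - (\|A\| - m)^r \,=\, \|A\|^r - \Bigl(\|A\| - \tfrac{1}{\|(A-B)^{-1}\|}\Bigr)^r
\end{eqnarray*}
for every $0 < r \leq 1$. For part (ii), the same value of $m$ together with Corollary \ref{ccc} yields the logarithmic inequality immediately. There is no genuine obstacle here; the entire content of the corollary lies in recognizing that $\frac{1}{\|(A-B)^{-1}\|}$ is precisely the sharpest scalar lower bound for the positive invertible operator $A-B$, so that the hypothesis $A > B \geq 0$ feeds into the already-proven estimates in the strongest possible way.
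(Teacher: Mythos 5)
Your proposal is correct and matches the paper's own proof: both identify $m = \frac{1}{\|(A-B)^{-1}\|}$ as a scalar lower bound for the positive invertible operator $A-B$ and then cite Theorem \ref{1-1} and Corollary \ref{ccc}. Your extra justification via $\inf \mathrm{sp}(A-B) = 1/\|(A-B)^{-1}\|$ only spells out what the paper leaves implicit.
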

\begin{proof}
It follows from $A > B\geq 0$ that $A -B \geq \frac{1}{||(A-B)^{-1}||}>0$. Now the assertions are deduced from Theorem \ref{1-1} and Corollary \ref{ccc}.
\end{proof}
%======================================================================================%
\begin{remark}
The inequality in Corollary \ref{12} is sharp. Indeed for positive
scalars $a,b$, if $a>b$, then
$$a^r - b^r = a^r-\left(a- \frac{1}{(a-b)^{-1}}\right)^r$$
and
\begin{align*}
\log a &- \log b \\
&=  \log a- \log\left(a-\frac{1}{(a-b)^{-1}}\right)\,.
\end{align*}

\end{remark}

\bigskip
%========================================================================================
\bibliographystyle{amsplain}

\begin{thebibliography}{99}

\bibitem{ACS}
E. Andruchow, G. Corach and D. Stojanoff, \textit{Geometrical
significance of L\"owner--Heinz inequality}, Proc. Amer. Math. Soc.
\textbf{128} (2000), no. 4, 1031-–1037.

\bibitem{B-S}
J. Bendat and S. Sherman, \textit{Monotone and convex operator
functions}, Trans. Amer. Math. Soc. \textbf{79} (1955), 58--71.


\bibitem{BH}
R. Bhatia, \textit{Matrix Analysis}, Springer, New York, 1997.

\bibitem{FF}
J. Fujii and M. Fujii, \textit{A norm inequality for operator
monotone functions}, Math. Japon. \textbf{35} (1990), no. 2,
249-–252.

\bibitem{FUJ} M. Fujii, \textit{Furuta inequality and its related
topics}, Ann. Funct. Anal. \textbf{1} (2010), no. 2, 28–-45.

\bibitem{FPAMS}
T. Furuta, \textit{$A \geqslant B \geqslant 0$ assures $B^r A^p B^r
)^{1/q} \geqslant B^{(p+2r)/q}$ for $r \geqslant 0$, $p \geqslant
0$, $q \geqslant 1$ with $(1 + wr)q \geqslant p + 2r$}, Proc. Amer.
Math. Soc., \textbf{101} (1987), 85–-88.

\bibitem{FUR}
T. Furuta, \textit{Norm inequalities equivalent to Lowner-Heinz
theorem}, Rev. Math. Phys. \textbf{1} (1989), 135--137.

\bibitem{HEI}
E. Heinz, \textit{Beitr\"age zur St\"orungstheorie der
Spektralzerlegung}, Math. Ann. \textit{123} (1951), 415--438.

\bibitem{KWO}
M.K. Kwong, \textit{Inequalities for the powers of nonnegative
Hermitian operators}, Proc. Amer. Math. Soc. \textbf{51} (1975),
401–-406.

\bibitem{LOW}
C. L\"owner, \textit{\"Uber monotone Matrixfunktionen}, Math. Z.
\textit{38} (1934), 177--216.

\bibitem{PED}
G.K. Pedersen, \textit{Some operator monotone functions}, Proc.
Amer. Math. Soc. \textbf{36} (1972), 309--310.

\bibitem{Uc}
M. Uchiyama, \textit{Strong monotonicity of operator functions},
Integral Equations Operator Theory  \textbf{37} (2000), no. 1,
95--105.

\end{thebibliography}

\end{document}